\newtheorem{theorem}{Theorem}
\theoremstyle{plain}
\newtheorem{corollary}{Corollary}
\newtheorem{definition}{Definition}
\newtheorem{lemma}{Lemma}
\numberwithin{equation}{section}
\begin{document}
\title[On generalization of different type inequalities]{On generalization
of different type inequalities for $\left( \alpha ,m\right) $-convex
functions via fractional integrals}
\author{\.{I}mdat \.{I}\c{s}can}
\address{Department of Mathematics, Faculty of Sciences and Arts, Giresun
University, Giresun, Turkey}
\email{imdat.iscan@giresun.edu.tr}
\subjclass[2000]{ 26A51, 26A33, 26D10. }
\keywords{Hermite--Hadamard inequality, Riemann--Liouville fractional
integral, Ostrowski inequality, Simpson type inequality, $\left( \alpha
,m\right) $-convex function.}

\begin{abstract}
In this paper, new identity for fractional integrals have been defined. By
using of this identity, we obtained new general inequalities containing all
of Hadamard, Ostrowski and Simpson type inequalities for for functions whose
derivatives in absolute value at certain power are $\left( \alpha ,m\right) $%
-convex via Riemann Liouville fractional integral.
\end{abstract}

\maketitle

\section{Introduction}

Following inequalities are well known in the literature as Hermite-Hadamard
inequality, Ostrowski inequality and Simpson inequality respectively:

\begin{theorem}
Let $f:I\subseteq \mathbb{R\rightarrow R}$ be a convex function defined on
the interval $I$ of real numbers and $a,b\in I$ with $a<b$. The following
double inequality holds%
\begin{equation}
f\left( \frac{a+b}{2}\right) \leq \frac{1}{b-a}\dint\limits_{a}^{b}f(x)dx%
\leq \frac{f(a)+f(b)}{2}\text{.}  \label{1-1}
\end{equation}
\end{theorem}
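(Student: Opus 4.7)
The plan is to prove the two inequalities separately, both as direct consequences of the defining convexity condition $f(\lambda x + (1-\lambda) y) \le \lambda f(x) + (1-\lambda) f(y)$, together with the linear change of variables $x = ta + (1-t)b$ which identifies $\tfrac{1}{b-a}\int_a^b f(x)\,dx$ with $\int_0^1 f(ta+(1-t)b)\,dt$.

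For the right-hand inequality I would parametrize $x\in[a,b]$ by $t\in[0,1]$ via $x = ta + (1-t)b$. Convexity then yields pointwise $f(ta+(1-t)b) \le t f(a) + (1-t) f(b)$, and integrating in $t$ over $[0,1]$ gives
\[
\int_0^1 f(ta+(1-t)b)\,dt \;\le\; \int_0^1 \bigl[t f(a) + (1-t) f(b)\bigr]\,dt \;=\; \frac{f(a)+f(b)}{2},
\]
and the substitution above turns the left-hand side into $\tfrac{1}{b-a}\int_a^b f(x)\,dx$.

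For the left-hand inequality the trick is to express the midpoint $\tfrac{a+b}{2}$ as the average of the two symmetric points $ta+(1-t)b$ and $(1-t)a+tb$, which works for every $t\in[0,1]$. Convexity applied at these two points yields
\[
f\!\left(\frac{a+b}{2}\right) \;\le\; \frac{1}{2}\bigl[f(ta+(1-t)b) + f((1-t)a+tb)\bigr].
\]
Integrating in $t$ over $[0,1]$ and noting that both integrals on the right collapse (by a change of variables) to $\tfrac{1}{b-a}\int_a^b f(x)\,dx$ gives the desired bound.

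There is no real obstacle here; the only subtlety is the integrability of $f$, which follows because a convex function on an open interval is continuous on compact subintervals, so the Riemann integral $\int_a^b f(x)\,dx$ makes sense. The step most worth emphasizing is the symmetric decomposition of the midpoint used for the left inequality, since writing $\tfrac{a+b}{2} = \tfrac{1}{2}(ta+(1-t)b) + \tfrac{1}{2}((1-t)a+tb)$ is the device that makes the pointwise convexity estimate integrate to an average.
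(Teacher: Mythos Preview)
Your argument is correct and is the standard textbook proof of the Hermite--Hadamard inequality: the right-hand bound follows by integrating the pointwise convexity estimate along the segment, and the left-hand bound follows from the symmetric decomposition of the midpoint that you describe. The only minor caveat is your integrability remark: continuity of a convex function is guaranteed on the interior of $I$, so if $a$ or $b$ happens to be an endpoint of $I$ one should invoke instead that a convex function on an interval is bounded on compact subintervals and has at most two discontinuities (at the endpoints), which still suffices for Riemann integrability.

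As for comparison with the paper: the paper does not supply a proof of this statement at all. It is quoted in the introduction as the classical Hermite--Hadamard inequality, alongside the Ostrowski and Simpson inequalities, purely as background for the new fractional-integral generalizations that follow. So there is nothing to compare your approach against.
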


\begin{theorem}
Let $f:I\subseteq \mathbb{R\rightarrow R}$ be a mapping differentiable in $%
I^{\circ },$ the interior of I, and let $a,b\in I^{\circ }$ with $a<b.$ If $%
\left\vert f^{\prime }(x)\right\vert \leq M$ for all $x\in \left[ a,b\right]
,$ then the following inequality holds%
\begin{equation*}
\left\vert f(x)-\frac{1}{b-a}\dint\limits_{a}^{b}f(t)dt\right\vert \leq 
\frac{M}{b-a}\left[ \frac{\left( x-a\right) ^{2}+\left( b-x\right) ^{2}}{2}%
\right]
\end{equation*}%
for all $x\in \left[ a,b\right] .$
\end{theorem}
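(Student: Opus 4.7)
The plan is to establish the classical Montgomery identity, which expresses the deviation of $f(x)$ from its integral mean as a weighted integral of $f'$, and then to bound that integral using the hypothesis $|f'|\le M$.

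First I would introduce the Peano kernel
\begin{equation*}
P(x,t)=\begin{cases} t-a, & t\in[a,x],\\ t-b, & t\in(x,b],\end{cases}
\end{equation*}
and integrate $P(x,t)f'(t)$ over $[a,b]$. Splitting the integral at $x$ and applying integration by parts on each piece, the boundary terms at $a$ and $b$ vanish (since $P(x,a)=0$ and $P(x,b)=0$), while the boundary terms at $t=x$ combine to give $(x-a)f(x)+(b-x)f(x)=(b-a)f(x)$, and the remaining pieces contribute $-\int_a^b f(t)\,dt$. This yields the Montgomery identity
\begin{equation*}
f(x)-\frac{1}{b-a}\int_a^b f(t)\,dt=\frac{1}{b-a}\int_a^b P(x,t)f'(t)\,dt.
\end{equation*}

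Next I would take absolute values, use the triangle inequality for integrals, and apply the bound $|f'(t)|\le M$ to get
\begin{equation*}
\left|f(x)-\frac{1}{b-a}\int_a^b f(t)\,dt\right|\le \frac{M}{b-a}\int_a^b |P(x,t)|\,dt.
\end{equation*}
A direct computation then gives
\begin{equation*}
\int_a^b |P(x,t)|\,dt=\int_a^x(t-a)\,dt+\int_x^b(b-t)\,dt=\frac{(x-a)^2+(b-x)^2}{2},
\end{equation*}
which yields the claimed inequality.

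The only non-routine step is the derivation of the Montgomery identity itself; the rest is a matter of bookkeeping. Since $f$ is only assumed differentiable on the interior $I^{\circ}$, some mild care is needed to justify the integration by parts up to the endpoints (e.g., via absolute continuity on compact subintervals and a limiting argument), but this is standard under the given hypotheses.
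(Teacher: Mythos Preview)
Your argument via the Montgomery identity is correct and is the standard proof of Ostrowski's inequality. Note, however, that the paper does not actually supply a proof of this theorem: it is quoted in the introduction as a classical result, alongside the Hermite--Hadamard and Simpson inequalities, to motivate the new work. So there is no ``paper's own proof'' to compare against in the strict sense.

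That said, the paper's machinery does implicitly contain your argument. Specializing Lemma~\ref{2.1a} to $m=1$, $\theta=1$, $\lambda=0$ collapses $S_f$ to $f(x)-\frac{1}{b-a}\int_a^b f(t)\,dt$ and the right-hand side to a reparametrized Montgomery identity; bounding $|f'|\le M$ then reproduces exactly the estimate you obtain (this is essentially what Corollary~1(6) with $\theta=m=1$ records). Your direct Peano-kernel derivation is the elementary, self-contained version of this, while the paper's identity is designed to handle the fractional, $(\alpha,m)$-convex generalization in one stroke.
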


\begin{theorem}
Let $f:\left[ a,b\right] \mathbb{\rightarrow R}$ be a four times
continuously differentiable mapping on $\left( a,b\right) $ and $\left\Vert
f^{(4)}\right\Vert _{\infty }=\underset{x\in \left( a,b\right) }{\sup }%
\left\vert f^{(4)}(x)\right\vert <\infty .$ Then the following inequality
holds:%
\begin{equation*}
\left\vert \frac{1}{3}\left[ \frac{f(a)+f(b)}{2}+2f\left( \frac{a+b}{2}%
\right) \right] -\frac{1}{b-a}\dint\limits_{a}^{b}f(x)dx\right\vert \leq 
\frac{1}{2880}\left\Vert f^{(4)}\right\Vert _{\infty }\left( b-a\right) ^{4}.
\end{equation*}
\end{theorem}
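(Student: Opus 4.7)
The plan is to rescale to a reference interval and then invoke the Peano kernel theorem, exploiting the fact that Simpson's quadrature rule integrates cubic polynomials exactly. First I would set $g(t)=f(a+(b-a)t)$ for $t\in[0,1]$, so that $g^{(k)}(t)=(b-a)^{k}f^{(k)}(a+(b-a)t)$ and the claimed inequality becomes the normalized statement
$$\left|\frac{g(0)+g(1)}{6}+\frac{2}{3}\,g\!\left(\tfrac{1}{2}\right)-\int_{0}^{1}g(t)\,dt\right|\leq\frac{\|g^{(4)}\|_{\infty}}{2880}.$$
Denote the left-hand side functional by $E[g]$; the factor $(b-a)^{4}$ in the original statement is absorbed by the identity $\|g^{(4)}\|_{\infty}=(b-a)^{4}\|f^{(4)}\|_{\infty}$.

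The next step is to apply Taylor's theorem with integral remainder at $t=0$,
$$g(t)=\sum_{k=0}^{3}\frac{g^{(k)}(0)}{k!}t^{k}+\frac{1}{6}\int_{0}^{1}(t-s)_{+}^{3}\,g^{(4)}(s)\,ds,$$
and to observe that $E$ annihilates every cubic. This exactness follows from the symmetry of the nodes $0,\tfrac{1}{2},1$ about $\tfrac{1}{2}$ (which kills odd-degree terms centered at $\tfrac{1}{2}$), combined with a direct verification that $E[1]=E[t^{2}]=0$. Interchanging integrations via Fubini then yields the Peano representation
$$E[g]=\int_{0}^{1}K(s)\,g^{(4)}(s)\,ds,\qquad K(s)=\frac{1}{6}\,E_{t}\!\left[(t-s)_{+}^{3}\right],$$
from which the triangle inequality gives $|E[g]|\leq\|g^{(4)}\|_{\infty}\int_{0}^{1}|K(s)|\,ds$.

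The remaining task, and the main obstacle, is to produce an explicit expression for $K$ and compute its $L^{1}$-norm. The kernel is a piecewise polynomial of degree three with breakpoint at $s=\tfrac{1}{2}$, symmetric under $s\mapsto 1-s$, and of constant sign on $[0,1]$; carrying out the elementary integration then gives $\int_{0}^{1}|K(s)|\,ds=\tfrac{1}{2880}$, which is exactly the sharp constant required. As an alternative route to pin down this constant, one can use the fact that $K$ does not change sign to apply the mean value theorem for integrals, writing $E[g]=C\,g^{(4)}(\xi)$ for some $\xi\in(0,1)$, and evaluate $C$ by testing on $g(t)=t^{4}$. The analytic content all sits in the Peano kernel step; the rest is bookkeeping to isolate the constant $1/2880$.
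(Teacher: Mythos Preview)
Your Peano-kernel argument is correct and is the standard textbook derivation of the sharp Simpson error bound: the rescaling, the exactness on cubics, the representation $E[g]=\int_0^1 K(s)g^{(4)}(s)\,ds$, and the identification of the constant via $E[t^4]=\tfrac{1}{120}$ so that $\int_0^1 K(s)\,ds=\tfrac{1}{2880}$ are all sound. The only point you assert without justification is the constant sign of $K$ on $[0,1]$; this is true (on $[\tfrac12,1]$ one has $6K(s)=(1-s)^3\bigl(\tfrac16-\tfrac{1-s}{4}\bigr)\ge 0$, and symmetry handles $[0,\tfrac12]$), and it is what turns $\int_0^1|K|$ into $\bigl|\int_0^1 K\bigr|$, so it is worth a one-line check rather than a bare claim.

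There is, however, nothing in the paper to compare your proof with. The statement you are proving is quoted in the introduction as a classical inequality (Simpson's inequality) alongside the Hermite--Hadamard and Ostrowski inequalities, purely as background; the paper supplies no proof of it. The paper's own results (Lemma~\ref{2.1a} and Theorems~\ref{2.3}--\ref{2.4}) concern a different object, namely Simpson-\emph{type} bounds involving first derivatives under $(\alpha,m)$-convexity of $|f'|^q$, obtained from an integration-by-parts identity and H\"older/power-mean inequalities. Those arguments neither use nor reprove the fourth-derivative estimate above.
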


In \cite{T85}, G. Toader considered the class of m-convexfunctions: another
intermediate between the usual convexity and starshaped convexity.

\begin{definition}
The function $f:[0,b]\rightarrow 
\mathbb{R}
,b>0,$ is said to be $m-$convex, where $m\in \lbrack 0,1]$, if we have%
\begin{equation*}
f(tx+m(1-t)y)\leq tf(x)+m(1-t)f(y)
\end{equation*}%
for all $x,y\in \lbrack 0,b]$ and $t\in \lbrack 0,1]$. We say that $f$ is $%
m- $concave if $(-f)$ is $m-$convex.
\end{definition}

The class of $(\alpha ,m)$-convex functions was first introduced In \cite%
{M93}, and it is defined as follows:

\begin{definition}
The function $f:\left[ 0,b\right] \mathbb{\rightarrow R}$, $b>0$, is said to
be $(\alpha ,m)$-convex where $(\alpha ,m)\in \left[ 0,1\right] ^{2}$, if we
have%
\begin{equation*}
f\left( tx+m(1-t)y\right) \leq t^{\alpha }f(x)+m(1-t^{\alpha })f(y)
\end{equation*}%
for all $x,y\in \left[ 0,b\right] $ and $t\in \left[ 0,1\right] $.
\end{definition}

It can be easily that for $(\alpha ,m)\in \left\{ (0,0),(\alpha
,0),(1,0),(1,m),(1,1),(\alpha ,1)\right\} $ one obtains the following
classes of functions: increasing, $\alpha $-starshaped, starshaped, $m$%
-convex, convex, $\alpha $-convex.

Denote by $K_{m}^{\alpha }(b)$ the set of all $(\alpha ,m)$-convex functions
on $\left[ 0,b\right] $ for which $f(0)\leq 0$.

We give some necessary definitions and mathematical preliminaries of
fractional calculus theory which are used throughout this paper.

\begin{definition}
Let $f\in L\left[ a,b\right] $. The Riemann-Liouville integrals $%
J_{a^{+}}^{\theta }f$ and $J_{b^{-}}^{\theta }f$ of oder $\theta >0$ with $%
a\geq 0$ are defined by

\begin{equation*}
J_{a^{+}}^{\theta }f(x)=\frac{1}{\Gamma (\theta )}\dint\limits_{a}^{x}\left(
x-t\right) ^{\theta -1}f(t)dt,\ x>a
\end{equation*}

and

\begin{equation*}
J_{b^{-}}^{\theta }f(x)=\frac{1}{\Gamma (\theta )}\dint\limits_{x}^{b}\left(
t-x\right) ^{\theta -1}f(t)dt,\ x<b
\end{equation*}%
respectively, where $\Gamma (\theta )$ is the Gamma function defined by $%
\Gamma (\theta )=$ $\dint\limits_{0}^{\infty }e^{-t}t^{\theta -1}dt$ and $%
J_{a^{+}}^{0}f(x)=J_{b^{-}}^{0}f(x)=f(x).$
\end{definition}

In the case of $\theta =1$, the fractional integral reduces to the classical
integral. Properties concerning this operator can be found \cite%
{GM97,MR93,P99}.

In recent years, many athors have studied errors estimations for
Hermite-Hadamard, Ostrowski and Simpson inequalities. For recent some
results and generalizations concerning $(\alpha ,m)$-convex functions see 
\cite{BOP08,I13,I13b,I13c,OAK11,OKS10,OSS11,SA11,SO12,SSYB11,P12}.

In this paper, new identity for fractional integrals have been defined. By
using of this identity, we obtained a generalization of Hadamard, Ostrowski
and Simpson type inequalities for $\left( \alpha ,m\right) $-convex
functions via Riemann Liouville fractional integral.

\section{\ Main Results}

Let $f:I\subseteq 
\mathbb{R}
\rightarrow 
\mathbb{R}
$ be a differentiable function on $I^{\circ }$, the interior of $I$,
throughout this section we will take%
\begin{eqnarray*}
&&S_{f}\left( mx,\lambda ,\theta ,ma,mb\right) \\
&=&\left( 1-\lambda \right) m^{\theta -1}\left[ \frac{\left( x-a\right)
^{\theta }+\left( b-x\right) ^{\theta }}{b-a}\right] f(mx)+\lambda m^{\theta
-1}\left[ \frac{\left( x-a\right) ^{\theta }f(ma)+\left( b-x\right) ^{\theta
}f(mb)}{b-a}\right] \\
&&-\frac{\Gamma \left( \theta +1\right) }{m(b-a)}\left[ J_{\left( mx\right)
^{-}}^{\theta }f(ma)+J_{\left( mx\right) ^{+}}^{\theta }f(mb)\right]
\end{eqnarray*}%
where $m\in \left( 0,1\right] ,$ $ma,b\in I$ with $a<b$, $\ x\in \lbrack
a,b] $ , $\lambda \in \left[ 0,1\right] $, $\theta >0$ and $\Gamma $ is
Euler Gamma function. In order to prove our main results we need the
following identity.

\begin{lemma}
\label{2.1a},Let $f:I\subset \mathbb{R\rightarrow R}$ be a differentiable
mapping on $I^{\circ }$ such that $f^{\prime }\in L[ma,mb]$, where $m\in
\left( 0,1\right] $, $ma,b\in I$ with $a<b$. Then for all $x\in \lbrack a,b]$
, $\lambda \in \left[ 0,1\right] $ and $\theta >0$ we have:%
\begin{eqnarray*}
&&S_{f}\left( mx,\lambda ,\theta ,ma,mb\right) =\frac{m^{\theta }\left(
x-a\right) ^{\theta +1}}{b-a}\dint\limits_{0}^{1}\left( t^{\theta }-\lambda
\right) f^{\prime }\left( tmx+m\left( 1-t\right) a\right) dt \\
&&+\frac{m^{\theta }\left( b-x\right) ^{\theta +1}}{b-a}\dint\limits_{0}^{1}%
\left( \lambda -t^{\theta }\right) f^{\prime }\left( tmx+m\left( 1-t\right)
b\right) dt.
\end{eqnarray*}
\end{lemma}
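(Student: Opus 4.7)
The plan is to compute the two integrals on the right-hand side separately via integration by parts, then change variables in each resulting integral to unmask the Riemann--Liouville fractional integrals that appear in $S_f(mx,\lambda,\theta,ma,mb)$.

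Denote the two integrals by
\begin{equation*}
I_{1}=\int_{0}^{1}\!\left( t^{\theta }-\lambda \right) f^{\prime }\!\left( tmx+m(1-t)a\right) dt,\quad
I_{2}=\int_{0}^{1}\!\left( \lambda -t^{\theta }\right) f^{\prime }\!\left( tmx+m(1-t)b\right) dt.
\end{equation*}
For $I_{1}$, I would integrate by parts with $u=t^{\theta }-\lambda $ and $dv=f^{\prime }(tmx+m(1-t)a)\,dt$, noting that an antiderivative of $dv$ in $t$ is $\frac{1}{m(x-a)}f(tmx+m(1-t)a)$ (since $\frac{d}{dt}[tmx+m(1-t)a]=m(x-a)$). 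The boundary evaluation at $t=1$ and $t=0$ produces $\frac{(1-\lambda )f(mx)+\lambda f(ma)}{m(x-a)}$, while the remaining integral is $-\frac{\theta }{m(x-a)}\int_{0}^{1}t^{\theta -1}f(tmx+m(1-t)a)\,dt$. The substitution $s=tmx+m(1-t)a$ (so $ds=m(x-a)\,dt$ and $t=\frac{s-ma}{m(x-a)}$) transforms this last integral, using the definition of $J_{(mx)^{-}}^{\theta }f(ma)$, into $\frac{\Gamma (\theta )}{[m(x-a)]^{\theta }}J_{(mx)^{-}}^{\theta }f(ma)$. Collecting and using $\theta \Gamma (\theta )=\Gamma (\theta +1)$ gives
\begin{equation*}
I_{1}=\frac{(1-\lambda )f(mx)+\lambda f(ma)}{m(x-a)}-\frac{\Gamma (\theta +1)}{[m(x-a)]^{\theta +1}}J_{(mx)^{-}}^{\theta }f(ma).
\end{equation*}

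For $I_{2}$ I would repeat the same procedure; the only difference is that $\frac{d}{dt}[tmx+m(1-t)b]=-m(b-x)$, which flips a sign and, combined with the sign flip coming from $\lambda -t^{\theta }$ instead of $t^{\theta }-\lambda $, yields boundary terms $\frac{(1-\lambda )f(mx)+\lambda f(mb)}{m(b-x)}$. The substitution $s=tmx+m(1-t)b$ reverses the limits of integration, which is exactly what is needed to match the right-sided fractional integral $J_{(mx)^{+}}^{\theta }f(mb)=\frac{1}{\Gamma (\theta )}\int_{mx}^{mb}(mb-s)^{\theta -1}f(s)\,ds$. This gives
\begin{equation*}
I_{2}=\frac{(1-\lambda )f(mx)+\lambda f(mb)}{m(b-x)}-\frac{\Gamma (\theta +1)}{[m(b-x)]^{\theta +1}}J_{(mx)^{+}}^{\theta }f(mb).
\end{equation*}

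Finally I would multiply $I_{1}$ by $\frac{m^{\theta }(x-a)^{\theta +1}}{b-a}$ and $I_{2}$ by $\frac{m^{\theta }(b-x)^{\theta +1}}{b-a}$ and add. The prefactors cancel the denominators $[m(x-a)]^{\theta +1}$ and $[m(b-x)]^{\theta +1}$ in the fractional-integral terms, producing the single factor $\frac{\Gamma (\theta +1)}{m(b-a)}$ on each; the boundary contributions in $f(mx)$ combine through the coefficient $(1-\lambda )m^{\theta -1}\frac{(x-a)^{\theta }+(b-x)^{\theta }}{b-a}$, and the $f(ma),f(mb)$ contributions group into $\lambda m^{\theta -1}\frac{(x-a)^{\theta }f(ma)+(b-x)^{\theta }f(mb)}{b-a}$. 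This reproduces $S_{f}(mx,\lambda ,\theta ,ma,mb)$ exactly.

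The only real obstacle is bookkeeping: keeping track of the sign change arising from the orientation of the second endpoint, correctly lining up the direction of the change of variable with the one-sided definitions $J_{(mx)^{-}}^{\theta }$ versus $J_{(mx)^{+}}^{\theta }$, and verifying that the powers of $m$, of $(x-a)$ and $(b-x)$, and the identity $\theta \Gamma (\theta )=\Gamma (\theta +1)$ all combine to the precise coefficients appearing in $S_{f}$. No analytic difficulty is expected; the argument is purely computational given the definitions.
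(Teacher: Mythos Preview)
Your proposal is correct and follows exactly the route the paper indicates: the paper itself does not give a detailed proof but merely remarks that the identity follows by integrating the two right-hand integrals by parts and then changing the variable, which is precisely what you carry out. Your evaluation of the boundary terms, the change of variables producing $J_{(mx)^{-}}^{\theta }f(ma)$ and $J_{(mx)^{+}}^{\theta }f(mb)$, and the final recombination into $S_{f}(mx,\lambda ,\theta ,ma,mb)$ are all accurate.
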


A simple proof of the equality can be done by performing an integration by
parts in the integrals from the right side and changing the variable. The
details are left to the interested reader.

\begin{theorem}
\label{2.3}Let $f:$ $I\subset \lbrack 0,\infty )\rightarrow 
\mathbb{R}
$ be a differentiable function on $I^{\circ }$ such that $f^{\prime }\in
L[ma,mb]$, where $m\in \left( 0,1\right] $, $ma,b\in I$ $^{\circ }$ with $%
a<b $. If $|f^{\prime }|^{q}$ is $(\alpha ,m)$-convex on $[ma,b]$ for some
fixed $q\geq 1$, $x\in \lbrack a,b]$, $\lambda \in \left[ 0,1\right] $ and $%
\theta >0$ then the following inequality for fractional integrals holds%
\begin{eqnarray}
&&\left\vert S_{f}\left( mx,\lambda ,\theta ,ma,mb\right) \right\vert
\label{2-4} \\
&\leq &\frac{m^{\theta }A_{1}^{1-\frac{1}{q}}\left( \theta ,\lambda \right) 
}{b-a}\left\{ \left( x-a\right) ^{\theta +1}\left( \left\vert f^{\prime
}\left( mx\right) \right\vert ^{q}A_{2}\left( \alpha ,\theta ,\lambda
\right) +m\left\vert f^{\prime }\left( a\right) \right\vert ^{q}A_{3}\left(
\alpha ,\theta ,\lambda \right) \right) ^{\frac{1}{q}}\right.  \notag \\
&&\left. +\left( b-x\right) ^{\theta +1}\left( \left\vert f^{\prime }\left(
mx\right) \right\vert ^{q}A_{2}\left( \alpha ,\theta ,\lambda \right)
+m\left\vert f^{\prime }\left( b\right) \right\vert ^{q}A_{3}\left( \alpha
,\theta ,\lambda \right) \right) ^{\frac{1}{q}}\right\}  \notag
\end{eqnarray}%
where 
\begin{eqnarray*}
A_{1}\left( \theta ,\lambda \right) &=&\frac{2\theta \lambda ^{1+\frac{1}{%
\theta }}+1}{\theta +1}-\lambda , \\
A_{2}\left( \alpha ,\theta ,\lambda \right) &=&\frac{2\theta \lambda ^{1+%
\frac{1+\alpha }{\theta }}}{\left( \alpha +1\right) \left( \alpha +\theta
+1\right) }+\frac{1}{\alpha +\theta +1}-\frac{\lambda }{\alpha +1},
\end{eqnarray*}%
\begin{eqnarray*}
A_{3}\left( \alpha ,\theta ,\lambda \right) &=&A_{1}\left( \theta ,\lambda
\right) -A_{2}\left( \alpha ,\theta ,\lambda \right) \\
&=&\frac{2\theta \lambda ^{1+\frac{1}{\theta }}+1}{\theta +1}-\frac{2\theta
\lambda ^{1+\frac{1+\alpha }{\theta }}}{\left( \alpha +1\right) \left(
\alpha +\theta +1\right) }-\frac{1}{\alpha +\theta +1}-\frac{\alpha \lambda 
}{\alpha +1}.
\end{eqnarray*}
\end{theorem}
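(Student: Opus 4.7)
The plan is to start from the identity in Lemma \ref{2.1a}, apply the triangle inequality, and then use the power-mean inequality together with the $(\alpha,m)$-convexity hypothesis on each of the two integrals on the right-hand side.

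First I would write
\begin{equation*}
|S_{f}(mx,\lambda,\theta,ma,mb)|\le\frac{m^{\theta}(x-a)^{\theta+1}}{b-a}I_{1}+\frac{m^{\theta}(b-x)^{\theta+1}}{b-a}I_{2},
\end{equation*}
where $I_{1}=\int_{0}^{1}|t^{\theta}-\lambda|\,|f'(tmx+m(1-t)a)|\,dt$ and $I_{2}$ is the analogous integral with $a$ replaced by $b$. For each $I_{j}$ I would split $|t^{\theta}-\lambda|=|t^{\theta}-\lambda|^{1-1/q}\cdot|t^{\theta}-\lambda|^{1/q}$ and apply H\"older's (power-mean) inequality with exponents $q/(q-1)$ and $q$; this gives
\begin{equation*}
I_{j}\le\Bigl(\int_{0}^{1}|t^{\theta}-\lambda|\,dt\Bigr)^{1-1/q}\Bigl(\int_{0}^{1}|t^{\theta}-\lambda|\,|f'(\cdot)|^{q}\,dt\Bigr)^{1/q}.
\end{equation*}

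Next I would invoke the $(\alpha,m)$-convexity of $|f'|^{q}$ to obtain
\begin{equation*}
|f'(tmx+m(1-t)a)|^{q}\le t^{\alpha}|f'(mx)|^{q}+m(1-t^{\alpha})|f'(a)|^{q},
\end{equation*}
and the analogue for the second integrand (with $a$ replaced by $b$). Substituting these pointwise bounds into the inner integrals reduces everything to evaluating the three one-variable integrals
\begin{equation*}
A_{1}=\int_{0}^{1}|t^{\theta}-\lambda|\,dt,\qquad A_{2}=\int_{0}^{1}|t^{\theta}-\lambda|\,t^{\alpha}\,dt,\qquad A_{3}=\int_{0}^{1}|t^{\theta}-\lambda|(1-t^{\alpha})\,dt=A_{1}-A_{2}.
\end{equation*}

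The one non-trivial computational step is the evaluation of $A_{1}$ and $A_{2}$: since $t\mapsto t^{\theta}-\lambda$ changes sign at $t=\lambda^{1/\theta}\in[0,1]$, each integral must be split at that point and computed as two pieces. A direct antiderivative computation on $[0,\lambda^{1/\theta}]$ and $[\lambda^{1/\theta},1]$ yields the stated closed forms for $A_{1}(\theta,\lambda)$ and $A_{2}(\alpha,\theta,\lambda)$, from which $A_{3}=A_{1}-A_{2}$ follows algebraically. Plugging these into the bounds for $I_{1}$ and $I_{2}$ and collecting terms gives precisely the inequality \eqref{2-4}. I expect the only real obstacle to be bookkeeping in the piecewise integration for $A_{2}$, since the exponent $1+(1+\alpha)/\theta$ in the final expression must come out correctly; the rest is a mechanical application of the triangle inequality, power-mean inequality, and convexity hypothesis.
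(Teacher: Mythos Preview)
Your proposal is correct and follows essentially the same route as the paper: the paper also starts from Lemma~\ref{2.1a}, applies the modulus and the power-mean inequality, invokes the $(\alpha,m)$-convexity of $|f'|^{q}$ to reduce to the same three integrals $A_{1},A_{2},A_{3}$, and then evaluates them directly. Your identification of the split point $t=\lambda^{1/\theta}$ for computing $A_{1}$ and $A_{2}$ is exactly the ``simple computation'' the paper alludes to without detailing.
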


\begin{proof}
From Lemma \ref{2.1a}, property of the modulus and the power-mean inequality
we have%
\begin{eqnarray}
&&\left\vert S_{f}\left( mx,\lambda ,\theta ,ma,mb\right) \right\vert  \notag
\\
&\leq &\frac{m^{\theta }\left( x-a\right) ^{\theta +1}}{b-a}\left(
\dint\limits_{0}^{1}\left\vert t^{\theta }-\lambda \right\vert dt\right) ^{1-%
\frac{1}{q}}\left( \dint\limits_{0}^{1}\left\vert t^{\theta }-\lambda
\right\vert \left\vert f^{\prime }\left( tmx+m\left( 1-t\right) a\right)
\right\vert ^{q}dt\right) ^{\frac{1}{q}}  \notag \\
&&+\frac{m^{\theta }\left( b-x\right) ^{\theta +1}}{b-a}\left(
\dint\limits_{0}^{1}\left\vert t^{\theta }-\lambda \right\vert dt\right) ^{1-%
\frac{1}{q}}\left( \dint\limits_{0}^{1}\left\vert t^{\theta }-\lambda
\right\vert \left\vert f^{\prime }\left( tmx+m\left( 1-t\right) b\right)
\right\vert ^{q}dt\right) ^{\frac{1}{q}}.  \label{2-4a}
\end{eqnarray}%
Since $|f^{\prime }|^{q}$ is $(\alpha ,m)$-convex on $[ma,b],$ for all $t\in %
\left[ 0,1\right] $%
\begin{equation*}
\left\vert f^{\prime }\left( tmx+m\left( 1-t\right) a\right) \right\vert
^{q}\leq t^{\alpha }\left\vert f^{\prime }\left( mx\right) \right\vert
^{q}+m\left( 1-t^{\alpha }\right) \left\vert f^{\prime }\left( a\right)
\right\vert ^{q}
\end{equation*}%
and%
\begin{equation*}
\left\vert f^{\prime }\left( tmx+m\left( 1-t\right) b\right) \right\vert
^{q}\leq t^{\alpha }\left\vert f^{\prime }\left( mx\right) \right\vert
^{q}+m\left( 1-t^{\alpha }\right) \left\vert f^{\prime }\left( b\right)
\right\vert ^{q}.
\end{equation*}%
Hence by simple computation we get%
\begin{equation}
\dint\limits_{0}^{1}\left\vert t^{\theta }-\lambda \right\vert dt=\frac{%
2\theta \lambda ^{1+\frac{1}{\theta }}+1}{\theta +1}-\lambda  \label{2-4aa}
\end{equation}%
\begin{eqnarray}
&&\dint\limits_{0}^{1}\left\vert t^{\theta }-\lambda \right\vert \left\vert
f^{\prime }\left( tmx+m\left( 1-t\right) a\right) \right\vert ^{q}dt  \notag
\\
&\leq &\dint\limits_{0}^{1}\left\vert t^{\theta }-\lambda \right\vert \left(
t^{\alpha }\left\vert f^{\prime }\left( mx\right) \right\vert ^{q}+m\left(
1-t^{\alpha }\right) \left\vert f^{\prime }\left( a\right) \right\vert
^{q}\right) dt  \notag
\end{eqnarray}%
\begin{equation}
=\left\vert f^{\prime }\left( mx\right) \right\vert ^{q}A_{2}\left( \alpha
,\theta ,\lambda \right) +m\left\vert f^{\prime }\left( a\right) \right\vert
^{q}A_{3}\left( \alpha ,\theta ,\lambda \right) ,  \label{2-4b}
\end{equation}%
and similarly%
\begin{eqnarray}
&&\dint\limits_{0}^{1}\left\vert t^{\alpha }-\lambda \right\vert \left\vert
f^{\prime }\left( tmx+m\left( 1-t\right) b\right) \right\vert ^{q}dt  \notag
\\
&\leq &\dint\limits_{0}^{1}\left\vert t^{\alpha }-\lambda \right\vert
t\left\vert f^{\prime }\left( mx\right) \right\vert ^{q}+m\left( 1-t\right)
\left\vert f^{\prime }\left( b\right) \right\vert ^{q}dt  \notag
\end{eqnarray}%
\begin{equation}
=\left\vert f^{\prime }\left( mx\right) \right\vert ^{q}A_{2}\left( \alpha
,\theta ,\lambda \right) +m\left\vert f^{\prime }\left( b\right) \right\vert
^{q}A_{3}\left( \alpha ,\theta ,\lambda \right) .  \label{2-4c}
\end{equation}%
If we use (\ref{2-4aa}), (\ref{2-4b}) and (\ref{2-4c}) in (\ref{2-4a}), we
obtain the desired result. This completes the proof.
\end{proof}

\begin{corollary}
In Theorem \ref{2.3},

\begin{enumerate}
\item If we take $\theta =1,$ then inequality (\ref{2-4}) reduced to the
following inequality%
\begin{eqnarray*}
&&\left\vert S_{f}\left( mx,\lambda ,1,ma,mb\right) \right\vert \\
&=&\left\vert \left( 1-\lambda \right) f(mx)+\lambda \left[ \frac{\left(
x-a\right) f(ma)+\left( b-x\right) f(mb)}{b-a}\right] -\frac{1}{m(b-a)}%
\dint\limits_{ma}^{mb}f(t)dt\right\vert \\
&\leq &\frac{mA_{1}^{1-\frac{1}{q}}\left( 1,\lambda \right) }{b-a}\left\{
\left( x-a\right) ^{2}\left( \left\vert f^{\prime }\left( mx\right)
\right\vert ^{q}A_{2}\left( \alpha ,1,\lambda \right) +m\left\vert f^{\prime
}\left( a\right) \right\vert ^{q}A_{3}\left( \alpha ,1,\lambda \right)
\right) ^{\frac{1}{q}}\right. \\
&&\left. +\left( b-x\right) ^{2}\left( \left\vert f^{\prime }\left(
mx\right) \right\vert ^{q}A_{2}\left( \alpha ,1,\lambda \right) +m\left\vert
f^{\prime }\left( b\right) \right\vert ^{q}A_{3}\left( \alpha ,1,\lambda
\right) \right) ^{\frac{1}{q}}\right\} .
\end{eqnarray*}

\item If we take $q=1,$ then inequality (\ref{2-4}) reduced to the following
inequality%
\begin{eqnarray*}
&&\left\vert S_{f}\left( mx,\lambda ,\theta ,ma,mb\right) \right\vert \\
&\leq &\frac{m^{\theta }}{b-a}\left\{ \left( x-a\right) ^{\theta +1}\left(
\left\vert f^{\prime }\left( mx\right) \right\vert A_{2}\left( \alpha
,\theta ,\lambda \right) +m\left\vert f^{\prime }\left( a\right) \right\vert
A_{3}\left( \alpha ,\theta ,\lambda \right) \right) \right. \\
&&\left. +\left( b-x\right) ^{\theta +1}\left( \left\vert f^{\prime }\left(
mx\right) \right\vert A_{2}\left( \alpha ,\theta ,\lambda \right)
+m\left\vert f^{\prime }\left( b\right) \right\vert A_{3}\left( \alpha
,\theta ,\lambda \right) \right) \right\}
\end{eqnarray*}

\item If we take $x=\frac{a+b}{2},\ \lambda =\frac{1}{3},$then we get the
following Simpson type inequality via fractional integrals%
\begin{eqnarray*}
&&\left\vert \frac{2^{\theta -1}}{m^{\theta -1}\left( b-a\right) ^{\theta -1}%
}S_{f}\left( m\left( \frac{a+b}{2}\right) ,\frac{1}{3},\theta ,ma,mb\right)
\right\vert \\
&=&\left\vert \frac{1}{6}\left[ f(ma)+4f\left( \frac{m\left( a+b\right) }{2}%
\right) +f(mb)\right] -\frac{\Gamma \left( \theta +1\right) 2^{\theta -1}}{%
m^{\theta }\left( b-a\right) ^{\theta }}\left[ J_{\left( \frac{m\left(
a+b\right) }{2}\right) ^{-}}^{\theta }f(ma)+J_{\left( \frac{m\left(
a+b\right) }{2}\right) ^{+}}^{\theta }f(mb)\right] \right\vert \\
&\leq &\frac{m\left( b-a\right) }{4}A_{1}^{1-\frac{1}{q}}\left( \theta ,%
\frac{1}{3}\right) \left\{ \left( \left\vert f^{\prime }\left( \frac{m\left(
a+b\right) }{2}\right) \right\vert ^{q}A_{2}\left( \alpha ,\theta ,\frac{1}{3%
}\right) +m\left\vert f^{\prime }\left( a\right) \right\vert ^{q}A_{3}\left(
\alpha ,\theta ,\frac{1}{3}\right) \right) ^{\frac{1}{q}}\right. \\
&&\left. +\left( \left\vert f^{\prime }\left( \frac{m\left( a+b\right) }{2}%
\right) \right\vert ^{q}A_{2}\left( \alpha ,\theta ,\frac{1}{3}\right)
+m\left\vert f^{\prime }\left( b\right) \right\vert ^{q}A_{3}\left( \alpha
,\theta ,\frac{1}{3}\right) \right) ^{\frac{1}{q}}\right\} .
\end{eqnarray*}

\item If we take $x=\frac{a+b}{2},\ \lambda =0,$then we get the following
midpoint type inequality via fractional integrals%
\begin{eqnarray*}
&&\frac{2^{\theta -1}}{m^{\theta -1}\left( b-a\right) ^{\theta -1}}%
\left\vert S_{f}\left( m\left( \frac{a+b}{2}\right) ,0,\theta ,ma,mb\right)
\right\vert \\
&=&\left\vert f\left( \frac{m\left( a+b\right) }{2}\right) -\frac{\Gamma
\left( \theta +1\right) 2^{\theta -1}}{m^{\theta }\left( b-a\right) ^{\theta
}}\left[ J_{\left( \frac{m\left( a+b\right) }{2}\right) ^{-}}^{\theta
}f(ma)+J_{\left( \frac{m\left( a+b\right) }{2}\right) ^{+}}^{\theta }f(mb)%
\right] \right\vert \\
&\leq &\frac{m\left( b-a\right) }{4}\left( \frac{1}{\theta +1}\right) \left( 
\frac{1}{\alpha +\theta +1}\right) ^{\frac{1}{q}}\left\{ \left[ \left(
\theta +1\right) \left\vert f^{\prime }\left( \frac{m\left( a+b\right) }{2}%
\right) \right\vert ^{q}+\alpha m\left\vert f^{\prime }\left( a\right)
\right\vert ^{q}\right] ^{\frac{1}{q}}\right. \\
&&\left. +\left[ \left( \theta +1\right) \left\vert f^{\prime }\left( \frac{%
m\left( a+b\right) }{2}\right) \right\vert ^{q}+\alpha m\left\vert f^{\prime
}\left( b\right) \right\vert ^{q}\right] ^{\frac{1}{q}}\right\} .
\end{eqnarray*}

\item If we take$\ \lambda =1,$then we get the following generalized
trapezoid type inequality via fractional integrals%
\begin{eqnarray*}
&&m^{1-\theta }\left\vert S_{f}\left( mx,1,\alpha ,ma,mb\right) \right\vert
\\
&=&\left\vert \frac{\left( x-a\right) ^{\theta }f(ma)+\left( b-x\right)
^{\theta }f(mb)}{b-a}-\frac{\Gamma \left( \theta +1\right) }{m^{\theta
}\left( b-a\right) }\left[ J_{mx^{-}}^{\theta }f(ma)+J_{mx^{+}}^{\theta
}f(mb)\right] \right\vert \\
&\leq &\frac{m}{b-a}\left( \frac{\theta }{\theta +1}\right) ^{1-\frac{1}{q}%
}\left( \frac{1}{\left( \alpha +1\right) \left( \theta +1\right) \left(
\alpha +\theta +1\right) }\right) ^{\frac{1}{q}} \\
&&\times \left\{ \left( x-a\right) ^{\theta +1}\left[ \theta \left( \alpha
+1\right) \left\vert f^{\prime }\left( mx\right) \right\vert ^{q}+m\alpha
\left( \alpha +\theta +2\right) \left\vert f^{\prime }\left( a\right)
\right\vert \right] ^{\frac{1}{q}}\right. \\
&&\left. +\left( b-x\right) ^{\theta +1}\left[ \theta \left( \alpha
+1\right) \left\vert f^{\prime }\left( mx\right) \right\vert ^{q}+m\alpha
\left( \alpha +\theta +2\right) \left\vert f^{\prime }\left( b\right)
\right\vert \right] ^{\frac{1}{q}}\right\} .
\end{eqnarray*}

\item If $\ \left\vert f^{\prime }(u)\right\vert \leq M$ for all $u\in \left[
ma,b\right] $ and $\lambda =0,$ then we get the following Ostrowski type
inequality via fractional integrals%
\begin{eqnarray*}
&&\left\vert \left[ \frac{\left( x-a\right) ^{\theta }+\left( b-x\right)
^{\theta }}{b-a}\right] f(mx)-\frac{\Gamma \left( \theta +1\right) }{%
m^{\theta }\left( b-a\right) }\left[ J_{mx^{-}}^{\theta
}f(ma)+J_{mx^{+}}^{\theta }f(mb)\right] \right\vert \\
&\leq &mM\left( \frac{1}{\theta +1}\right) \left( \frac{\alpha m+\theta +1}{%
\alpha +\theta +1}\right) ^{\frac{1}{q}}\left[ \frac{\left( x-a\right)
^{\theta +1}+\left( b-x\right) ^{\theta +1}}{b-a}\right] ,
\end{eqnarray*}%
for each $x\in \left[ a,b\right] .$
\end{enumerate}
\end{corollary}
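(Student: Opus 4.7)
The plan is to obtain each of the six inequalities as a direct specialization of Theorem \ref{2.3}: in each case I choose specific values of $\theta$, $q$, $x$, or $\lambda$, simplify $S_{f}(mx,\lambda,\theta,ma,mb)$ and the auxiliary functions $A_1, A_2, A_3$, and verify that (\ref{2-4}) reduces to the claimed form. No new analytic input is required beyond Theorem \ref{2.3}; only careful algebraic bookkeeping.

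For parts (1) and (2), the substitutions are essentially cosmetic. When $\theta = 1$, $\Gamma(\theta+1) = 1$ and both Riemann--Liouville integrals collapse to ordinary Riemann integrals, so $J_{(mx)^{-}}^{1}f(ma) + J_{(mx)^{+}}^{1}f(mb) = \int_{ma}^{mb} f(t)\,dt$; $S_f$ then becomes the classical pre-Hadamard expression in (1). When $q = 1$, the power-mean factor $A_1^{1-1/q}$ equals $1$ and the outer $1/q$-th powers disappear, yielding (2) directly.

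For parts (3)--(5), the choice $x = (a+b)/2$ makes $x-a = b-x = (b-a)/2$, and specific $\lambda$ values give clean closed forms for $A_1, A_2, A_3$. With $\lambda = 1/3$ in (3), the convex combination $(1-\lambda)f(mx) + \lambda\cdot\tfrac{1}{2}[f(ma)+f(mb)]$ rearranges into $(1/6)[f(ma) + 4f(m(a+b)/2) + f(mb)]$; multiplying (\ref{2-4}) by $2^{\theta-1}/[m^{\theta-1}(b-a)^{\theta-1}]$ then matches the Simpson form. For the midpoint inequality (4), I take $\lambda = 0$, giving $A_1 = 1/(\theta+1)$, $A_2 = 1/(\alpha+\theta+1)$, $A_3 = \alpha/[(\theta+1)(\alpha+\theta+1)]$; extracting the common factor $[(\theta+1)(\alpha+\theta+1)]^{-1/q}$ from each inner bracket and combining with $A_1^{1-1/q}$ produces the stated coefficient $(1/(\theta+1))(1/(\alpha+\theta+1))^{1/q}$. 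For the trapezoid-type inequality (5), $\lambda = 1$ kills the $f(mx)$ term in $S_f$, and at $\lambda = 1$ one has $A_1 = \theta/(\theta+1)$, $A_2 = \theta/[(\alpha+1)(\alpha+\theta+1)]$, and $A_3 = \theta\alpha(\alpha+\theta+2)/[(\theta+1)(\alpha+1)(\alpha+\theta+1)]$; factoring the common $\theta/[(\theta+1)(\alpha+1)(\alpha+\theta+1)]$ from the brackets and multiplying through by $m^{1-\theta}$ recovers the stated form.

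Finally for (6), combining $\lambda = 0$ with the hypothesis $|f'(u)| \leq M$ replaces each $|f'|$-term on the right of (\ref{2-4}) by $M$; a short computation gives $A_2 + mA_3 = (\alpha m + \theta + 1)/[(\theta+1)(\alpha+\theta+1)]$, so after taking the $1/q$-th power and combining with $A_1^{1-1/q} = (1/(\theta+1))^{1-1/q}$, the Ostrowski-type bound emerges. I expect the principal (if tedious) obstacle throughout to be tracking constants correctly: keeping the powers of $m$, $(b-a)$, and the factored pieces of $A_1, A_2, A_3$ consistent as one passes between the abstract $S_f$ notation and the concrete midpoint, trapezoid, Simpson, and Ostrowski shapes.
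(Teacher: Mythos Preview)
Your proposal is correct and matches the paper's approach exactly: the paper gives no separate proof of this corollary at all, presenting each item as a self-evident specialization of Theorem~\ref{2.3} under the indicated parameter choices. Your write-up in fact supplies more detail than the paper does, correctly computing the values of $A_1,A_2,A_3$ at $\lambda\in\{0,1\}$ and tracking the constants through the normalizations; the only discrepancy you may encounter is that the coefficient $\theta(\alpha+1)$ in the bracket of part~(5) appears to be a misprint in the paper (your factoring would produce $\theta(\theta+1)$ there), but this is an issue with the stated corollary, not with your argument.
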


\begin{theorem}
\label{2.4}Let $f:$ $I\subset \lbrack 0,\infty )\rightarrow 
\mathbb{R}
$ be a differentiable function on $I^{\circ }$ such that $f^{\prime }\in
L[ma,mb]$, where $m\in \left( 0,1\right] $, $ma,b\in I$ $^{\circ }$ with $%
a<b $. If $|f^{\prime }|^{q}$ is $(\alpha ,m)$-convex on $[ma,b]$ for some
fixed $q>1$, $x\in \lbrack a,b]$, $\lambda \in \left[ 0,1\right] $ and $%
\theta >0$ then the following inequality for fractional integrals holds%
\begin{eqnarray}
&&\left\vert S_{f}\left( mx,\lambda ,\theta ,ma,mb\right) \right\vert
\label{3-1} \\
&\leq &\frac{m^{\theta }A_{4}\left( \theta ,\lambda ,p\right) ^{\frac{1}{p}}%
}{b-a}\left\{ \left( x-a\right) ^{\theta +1}\left( \frac{\left\vert
f^{\prime }\left( mx\right) \right\vert ^{q}+\alpha m\left\vert f^{\prime
}\left( a\right) \right\vert ^{q}}{\alpha +1}\right) ^{\frac{1}{q}}\right. 
\notag \\
&&\left. +\left( b-x\right) ^{\theta +1}\left( \frac{\left\vert f^{\prime
}\left( mx\right) \right\vert ^{q}+\alpha m\left\vert f^{\prime }\left(
b\right) \right\vert ^{q}}{\alpha +1}\right) ^{\frac{1}{q}}\right\}  \notag
\end{eqnarray}%
where 
\begin{eqnarray*}
&&A_{4}\left( \theta ,\lambda ,p\right) \\
&=&\left\{ 
\begin{array}{cc}
\frac{1}{\theta p+1}, & \lambda =0 \\ 
\begin{array}{c}
\frac{\lambda ^{\frac{\theta p+1}{\theta }}}{\theta }\left\{ \beta \left( 
\frac{1}{\alpha },p+1\right) +\frac{\left( 1-\lambda \right) ^{p+1}}{p+1}%
\right. \\ 
\left. \times _{2}F_{1}\left( \frac{1}{\theta }+p+1,p+1,p+2;1-\lambda
\right) \right\}%
\end{array}%
, & 0<\lambda <1 \\ 
\frac{1}{\theta }\beta \left( p+1,\frac{1}{\theta }\right) , & \lambda =1%
\end{array}%
\right. ,
\end{eqnarray*}%
$\beta $ is Euler Beta function defined by%
\begin{equation*}
\beta \left( x,y\right) =\frac{\Gamma (x)\Gamma (y)}{\Gamma (x+y)}%
=\dint\limits_{0}^{1}t^{x-1}\left( 1-t\right) ^{y-1}dt,\ \ x,y>0,
\end{equation*}%
$_{2}F_{1}$ is hypergeometric function defined by 
\begin{equation*}
_{2}F_{1}\left( a,b;c;z\right) =\frac{1}{\beta \left( b,c-b\right) }%
\dint\limits_{0}^{1}t^{b-1}\left( 1-t\right) ^{c-b-1}\left( 1-zt\right)
^{-a}dt,\ c>b>0,\ \left\vert z\right\vert <1\text{ (see \cite{AS65}),}
\end{equation*}%
and $\frac{1}{p}+\frac{1}{q}=1.$
\end{theorem}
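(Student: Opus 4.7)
The plan is to follow the same skeleton as the proof of Theorem \ref{2.3}, but to replace the power-mean inequality by H\"older's inequality with conjugate exponents $p$ and $q$, which is permissible since $q>1$. Starting from Lemma \ref{2.1a} and taking absolute values under the integral sign, I would bound each of the two integrals on the right by
\begin{equation*}
\int_0^1 \left|t^\theta - \lambda\right|\left|f'\bigl(tmx + m(1-t)a\bigr)\right|dt \le \left(\int_0^1 \left|t^\theta - \lambda\right|^p dt\right)^{1/p}\left(\int_0^1 \left|f'\bigl(tmx+m(1-t)a\bigr)\right|^q dt\right)^{1/q},
\end{equation*}
and analogously with $b$ in place of $a$.

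Next I would use the $(\alpha,m)$-convexity of $|f'|^q$ on $[ma,b]$, which gives $\left|f'(tmx+m(1-t)a)\right|^q \le t^\alpha |f'(mx)|^q + m(1-t^\alpha)|f'(a)|^q$. Integrating over $[0,1]$ and using $\int_0^1 t^\alpha\,dt = 1/(\alpha+1)$ yields
\begin{equation*}
\int_0^1 \left|f'\bigl(tmx+m(1-t)a\bigr)\right|^q dt \le \frac{|f'(mx)|^q + \alpha m|f'(a)|^q}{\alpha+1},
\end{equation*}
and the analogue with $|f'(b)|$ in place of $|f'(a)|$. Combining these two bounds with the prefactors $m^\theta (x-a)^{\theta+1}/(b-a)$ and $m^\theta (b-x)^{\theta+1}/(b-a)$ supplied by Lemma \ref{2.1a} reproduces the right-hand side of (\ref{3-1}), provided that $\int_0^1 |t^\theta - \lambda|^p\,dt$ is identified as $A_4(\theta,\lambda,p)$.

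The main obstacle is therefore the explicit evaluation of $A_4(\theta,\lambda,p)$. For $\lambda\in\{0,1\}$ the sign of $t^\theta-\lambda$ is constant on $(0,1)$, so a single substitution $u=t^\theta$ yields $1/(\theta p + 1)$ and $\beta(p+1,1/\theta)/\theta$, respectively. For $0<\lambda<1$ I would split at $t_0=\lambda^{1/\theta}$, writing
\begin{equation*}
A_4(\theta,\lambda,p) = \int_0^{\lambda^{1/\theta}}(\lambda - t^\theta)^p\,dt + \int_{\lambda^{1/\theta}}^{1}(t^\theta - \lambda)^p\,dt.
\end{equation*}
In the first integral the substitution $u=t^\theta/\lambda$ produces $\lambda^{p+1/\theta}\beta(1/\theta,p+1)/\theta$ directly. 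In the second, the chain of substitutions $u=t^\theta$ and then $s = 1 - (u-\lambda)/(1-\lambda)$ rewrites it as $\frac{(1-\lambda)^{p+1}}{\theta}\int_0^1 (1-s)^p \bigl(1-(1-\lambda)s\bigr)^{1/\theta - 1}\,ds$; this is precisely Euler's integral representation of $\frac{1}{p+1}{}_2F_1(1-1/\theta,1;p+2;1-\lambda)$, and a single Pfaff/Euler transformation converts it to $\frac{\lambda^{p+1/\theta}}{p+1}{}_2F_1(1/\theta+p+1,p+1;p+2;1-\lambda)$, matching the claimed form of $A_4$. Substituting these three cases into the H\"older bound yields (\ref{3-1}).
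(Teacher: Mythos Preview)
Your proposal is correct and follows exactly the paper's approach: Lemma \ref{2.1a} together with H\"older's inequality, then $(\alpha,m)$-convexity of $|f'|^{q}$ to bound the $q$-power integrals, and finally the identification of $\int_0^1 |t^{\theta}-\lambda|^{p}\,dt$ with $A_4(\theta,\lambda,p)$. The only difference is one of detail: where the paper simply records the three cases for $A_4$ ``by simple computation,'' you actually carry out the substitutions and the Euler transformation of ${}_2F_1$ that justify the $0<\lambda<1$ case, which is a welcome elaboration rather than a different argument.
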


\begin{proof}
From Lemma \ref{2.1a}, property of the modulus and using the H\"{o}lder
inequality we have%
\begin{eqnarray}
&&\left\vert S_{f}\left( mx,\lambda ,\theta ,ma,mb\right) \right\vert  \notag
\\
&\leq &\frac{m^{\theta }\left( x-a\right) ^{\theta +1}}{b-a}\left(
\dint\limits_{0}^{1}\left\vert t^{\theta }-\lambda \right\vert ^{p}dt\right)
^{\frac{1}{p}}\left( \dint\limits_{0}^{1}\left\vert f^{\prime }\left(
tmx+m\left( 1-t\right) a\right) \right\vert ^{q}dt\right) ^{\frac{1}{q}} 
\notag \\
&&+\frac{m^{\theta }\left( b-x\right) ^{\theta +1}}{b-a}\left(
\dint\limits_{0}^{1}\left\vert t^{\theta }-\lambda \right\vert ^{p}dt\right)
^{\frac{1}{p}}\left( \dint\limits_{0}^{1}\left\vert f^{\prime }\left(
tmx+m\left( 1-t\right) b\right) \right\vert ^{q}dt\right) ^{\frac{1}{q}}.
\label{3-a}
\end{eqnarray}%
Since $|f^{\prime }|^{q}$ is $(\alpha ,m)$-convex on $[ma,b],$ we get%
\begin{eqnarray}
\dint\limits_{0}^{1}\left\vert f^{\prime }\left( tmx+m\left( 1-t\right)
a\right) \right\vert ^{q}dt &\leq &\dint\limits_{0}^{1}t^{\alpha }\left\vert
f^{\prime }\left( mx\right) \right\vert ^{q}+m\left( 1-t^{\alpha }\right)
\left\vert f^{\prime }\left( a\right) \right\vert ^{q}dt  \notag \\
&=&\frac{\left\vert f^{\prime }\left( mx\right) \right\vert ^{q}+\alpha
m\left\vert f^{\prime }\left( a\right) \right\vert ^{q}}{\alpha +1},
\label{3-b}
\end{eqnarray}%
\begin{eqnarray}
\dint\limits_{0}^{1}\left\vert f^{\prime }\left( tmx+m\left( 1-t\right)
b\right) \right\vert ^{q}dt &\leq &\dint\limits_{0}^{1}t^{\alpha }\left\vert
f^{\prime }\left( mx\right) \right\vert ^{q}+m\left( 1-t^{\alpha }\right)
\left\vert f^{\prime }\left( b\right) \right\vert ^{q}dt  \notag \\
&=&\frac{\left\vert f^{\prime }\left( mx\right) \right\vert ^{q}+\alpha
m\left\vert f^{\prime }\left( b\right) \right\vert ^{q}}{\alpha +1},
\label{3-c}
\end{eqnarray}%
and by simple computation%
\begin{eqnarray}
&&\dint\limits_{0}^{1}\left\vert t^{\theta }-\lambda \right\vert ^{p}dt
\label{3-d} \\
&=&\left\{ 
\begin{array}{cc}
\frac{1}{\theta p+1}, & \lambda =0 \\ 
\begin{array}{c}
\frac{\lambda ^{\frac{\theta p+1}{\theta }}}{\theta }\left\{ \beta \left( 
\frac{1}{\theta },p+1\right) +\frac{\left( 1-\lambda \right) ^{p+1}}{p+1}%
\right. \\ 
\left. \times _{2}F_{1}\left( \frac{1}{\theta }+p+1,p+1,2+p;1-\lambda
\right) \right\}%
\end{array}%
, & 0<\lambda <1 \\ 
\frac{1}{\theta }\beta \left( p+1,\frac{1}{\theta }\right) , & \lambda =1%
\end{array}%
\right.  \notag
\end{eqnarray}%
Hence, If we use (\ref{3-b})-(\ref{3-d}) in (\ref{3-a}), we obtain the
desired result. This completes the proof.
\end{proof}

\begin{corollary}
In Theorem \ref{2.4},

\begin{enumerate}
\item If we take $\theta =1,$ then inequality (\ref{2-4}) reduced to the
following inequality%
\begin{eqnarray*}
&&\left\vert \left( 1-\lambda \right) f(mx)+\lambda \left[ \frac{\left(
x-a\right) f(ma)+\left( b-x\right) f(mb)}{b-a}\right] -\frac{1}{m(b-a)}%
\dint\limits_{ma}^{mb}f(t)dt\right\vert \\
&\leq &\frac{mA_{4}\left( 1,\lambda ,p\right) ^{\frac{1}{p}}}{b-a}\left\{
\left( x-a\right) ^{2}\left( \frac{\left\vert f^{\prime }\left( mx\right)
\right\vert ^{q}+\alpha m\left\vert f^{\prime }\left( a\right) \right\vert
^{q}}{\alpha +1}\right) ^{\frac{1}{q}}\right. \\
&&\left. +\left( b-x\right) ^{2}\left( \frac{\left\vert f^{\prime }\left(
mx\right) \right\vert ^{q}+\alpha m\left\vert f^{\prime }\left( b\right)
\right\vert ^{q}}{\alpha +1}\right) ^{\frac{1}{q}}\right\} .
\end{eqnarray*}

\item If we take $x=\frac{a+b}{2},\ \lambda =\frac{1}{3},$then we get the
following Simpson type inequality via fractional integrals%
\begin{eqnarray*}
&&\left\vert \frac{1}{6}\left[ f(ma)+4f\left( \frac{m\left( a+b\right) }{2}%
\right) +f(mb)\right] -\frac{\Gamma \left( \theta +1\right) 2^{\theta -1}}{%
m^{\theta }\left( b-a\right) ^{\theta }}\left[ J_{\left( \frac{m\left(
a+b\right) }{2}\right) ^{-}}^{\theta }f(ma)+J_{\left( \frac{m\left(
a+b\right) }{2}\right) ^{+}}^{\theta }f(mb)\right] \right\vert \\
&\leq &\frac{m\left( b-a\right) }{4}A_{4}^{\frac{1}{p}}\left( \theta ,\frac{1%
}{3},p\right) \left\{ \left( \frac{\left\vert f^{\prime }\left( \frac{%
m\left( a+b\right) }{2}\right) \right\vert ^{q}+\alpha m\left\vert f^{\prime
}\left( a\right) \right\vert ^{q}}{\alpha +1}\right) ^{\frac{1}{q}}\right. \\
&&\left. +\left( \frac{\left\vert f^{\prime }\left( \frac{m\left( a+b\right) 
}{2}\right) \right\vert ^{q}+\alpha m\left\vert f^{\prime }\left( b\right)
\right\vert ^{q}}{\alpha +1}\right) ^{\frac{1}{q}}\right\} .
\end{eqnarray*}

\item If we take $x=\frac{a+b}{2},\ \lambda =0,$then we get the following
midpoint type inequality via fractional integrals%
\begin{eqnarray*}
&&\left\vert f\left( \frac{m\left( a+b\right) }{2}\right) -\frac{\Gamma
\left( \theta +1\right) 2^{\theta -1}}{m^{\theta }\left( b-a\right) ^{\theta
}}\left[ J_{\left( \frac{m\left( a+b\right) }{2}\right) ^{-}}^{\theta
}f(ma)+J_{\left( \frac{m\left( a+b\right) }{2}\right) ^{+}}^{\theta }f(mb)%
\right] \right\vert \\
&\leq &\frac{m\left( b-a\right) }{4}\left( \frac{1}{\theta p+1}\right) ^{%
\frac{1}{p}}\left\{ \left( \frac{\left\vert f^{\prime }\left( \frac{m\left(
a+b\right) }{2}\right) \right\vert ^{q}+\alpha m\left\vert f^{\prime }\left(
a\right) \right\vert ^{q}}{\alpha +1}\right) ^{\frac{1}{q}}\right. \\
&&\left. +\left( \frac{\left\vert f^{\prime }\left( \frac{m\left( a+b\right) 
}{2}\right) \right\vert ^{q}+\alpha m\left\vert f^{\prime }\left( b\right)
\right\vert ^{q}}{\alpha +1}\right) ^{\frac{1}{q}}\right\} .
\end{eqnarray*}

\item If we take$\ \lambda =1,$then we get the following generalized
trapezoid type inequality via fractional integrals%
\begin{eqnarray*}
&&\left\vert \frac{\left( x-a\right) ^{\theta }f(ma)+\left( b-x\right)
^{\theta }f(mb)}{b-a}-\frac{\Gamma \left( \theta +1\right) }{m^{\theta
}\left( b-a\right) }\left[ J_{mx^{-}}^{\theta }f(ma)+J_{mx^{+}}^{\theta
}f(mb)\right] \right\vert \\
&\leq &\frac{m\left( \frac{1}{\theta }\beta \left( p+1,\frac{1}{\theta }%
\right) \right) ^{\frac{1}{p}}}{b-a}\left\{ \left( x-a\right) ^{\theta
+1}\left( \frac{\left\vert f^{\prime }\left( mx\right) \right\vert
^{q}+\alpha m\left\vert f^{\prime }\left( a\right) \right\vert ^{q}}{\alpha
+1}\right) ^{\frac{1}{q}}\right. \\
&&\left. +\left( b-x\right) ^{\theta +1}\left( \frac{\left\vert f^{\prime
}\left( mx\right) \right\vert ^{q}+\alpha m\left\vert f^{\prime }\left(
b\right) \right\vert ^{q}}{\alpha +1}\right) ^{\frac{1}{q}}\right\}
\end{eqnarray*}

\item If $\ \left\vert f^{\prime }(u)\right\vert \leq M$ for all $u\in \left[
ma,b\right] $ and $\lambda =0,$ then we get the following Ostrowski type
inequality via fractional integrals%
\begin{eqnarray*}
&&\left\vert \left[ \frac{\left( x-a\right) ^{\theta }+\left( b-x\right)
^{\theta }}{b-a}\right] f(mx)-\frac{\Gamma \left( \theta +1\right) }{%
m^{\theta }\left( b-a\right) }\left[ J_{mx^{-}}^{\theta
}f(ma)+J_{mx^{+}}^{\theta }f(mb)\right] \right\vert \\
&\leq &mM\left( \frac{1}{\theta p+1}\right) ^{\frac{1}{p}}\left( \frac{%
1+\alpha m}{\alpha +1}\right) \left[ \frac{\left( x-a\right) ^{\theta
+1}+\left( b-x\right) ^{\theta +1}}{b-a}\right] ,
\end{eqnarray*}%
for each $x\in \left[ a,b\right] .$
\end{enumerate}
\end{corollary}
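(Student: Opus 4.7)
The plan is to derive each of the five items as a direct specialization of Theorem~\ref{2.4}: the work consists of substituting the indicated values of the parameters $\theta$, $x$, $\lambda$ (or the uniform bound $M$ on $|f'|$) into the definition of $S_{f}(mx,\lambda,\theta,ma,mb)$ and into the master inequality (\ref{3-1}), then simplifying both sides by elementary algebra and, where appropriate, rescaling by an explicit scalar so that the displayed left-hand side appears naturally.

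I would handle item~(1) ($\theta=1$) first. On the left, the Riemann--Liouville operators collapse to ordinary Riemann integrals, so that $J^{1}_{(mx)^{-}}f(ma)+J^{1}_{(mx)^{+}}f(mb)=\int_{ma}^{mb} f(t)\,dt$; moreover $(x-a)^{1}+(b-x)^{1}=b-a$ cancels the outer $(b-a)$ in the coefficient of $f(mx)$, leaving the combination $(1-\lambda)f(mx)+\lambda\bigl[(x-a)f(ma)+(b-x)f(mb)\bigr]/(b-a)$ claimed. The right-hand side is then just (\ref{3-1}) with $\theta=1$ so that $m^{\theta}$ becomes $m$.

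Next I would treat items~(2) and~(3) simultaneously, since both impose $x=(a+b)/2$. With this midpoint choice, $x-a=b-x=(b-a)/2$, hence $(x-a)^{\theta+1}=(b-x)^{\theta+1}=\bigl((b-a)/2\bigr)^{\theta+1}$ and $(x-a)^{\theta}+(b-x)^{\theta}=2\bigl((b-a)/2\bigr)^{\theta}$. Multiplying both sides of (\ref{3-1}) by $2^{\theta-1}/\bigl(m^{\theta-1}(b-a)^{\theta-1}\bigr)$ converts the prefactor $m^{\theta}A_{4}^{1/p}/(b-a)$ times $\bigl((b-a)/2\bigr)^{\theta+1}$ into the compact coefficient $m(b-a)/4$ displayed in the claim. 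For item~(2) ($\lambda=1/3$), the weighted sum $(2/3)f(m(a+b)/2)+(1/6)(f(ma)+f(mb))=\tfrac{1}{6}\bigl[f(ma)+4f(m(a+b)/2)+f(mb)\bigr]$ emerges on the left; for item~(3) ($\lambda=0$), only the $f(m(a+b)/2)$ term survives, and $A_{4}(\theta,0,p)=1/(\theta p+1)$ is read off directly from the piecewise definition of~$A_{4}$.

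For item~(4) ($\lambda=1$), the $f(mx)$ summand in $S_{f}$ vanishes and the remaining boundary combination, after multiplication by $m^{1-\theta}$, is precisely the trapezoid-type expression on the left of the claim; on the right, $A_{4}(\theta,1,p)=(1/\theta)\beta(p+1,1/\theta)$ again comes straight from the piecewise definition. Finally, for item~(5) ($\lambda=0$ together with $|f'(u)|\le M$ on $[ma,b]$), I would start from the general-$x$ form obtained en route to item~(3), substitute $|f'(mx)|^{q},|f'(a)|^{q},|f'(b)|^{q}\le M^{q}$ into the two H\"older brackets so that each bracket reduces to $M^{q}(1+\alpha m)/(\alpha+1)$, then take $q$-th roots and factor out $M$; the prefactor $m^{1-\theta}$ combines with the $m^{\theta}$ in (\ref{3-1}) to leave a single power of~$m$. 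No step here presents a genuine obstacle; the only care required is bookkeeping the powers of $m$ and $(b-a)$ so that the constants match the displayed expressions exactly.
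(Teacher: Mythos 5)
Your proposal is correct and follows exactly the route the paper intends: the corollary is stated without a separate proof as a direct specialization of Theorem \ref{2.4}, and your substitutions and rescalings (the factor $2^{\theta-1}/\bigl(m^{\theta-1}(b-a)^{\theta-1}\bigr)$ for the midpoint cases, $m^{1-\theta}$ for items (4) and (5), and the piecewise evaluation of $A_{4}$ at $\lambda=0,\tfrac13,1$) reproduce each displayed identity and bound. The only divergence is in item (5), where your computation correctly yields the factor $\left(\frac{1+\alpha m}{\alpha+1}\right)^{\frac{1}{q}}$ rather than the unexponentiated $\frac{1+\alpha m}{\alpha+1}$ printed in the corollary; since $\frac{1+\alpha m}{\alpha+1}\leq 1$, the printed constant is the smaller one and does not follow from the theorem, so this is a typo in the paper's display rather than a gap in your argument.
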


\end{document}